\newtheorem{thm}{Theorem}
\newtheorem{lem}[thm]{Lemma}
\newtheorem{remark}{Remark}
\newtheorem{cor}[thm]{Corollary}
\newtheorem{prop}[thm]{Proposition}
\newtheorem{example}{Example}
\begin{document}

\title[Absolute value equations]{Comments on finite termination of the generalized Newton method for absolute value equations}

\author{Chun-Hua Guo}
\address{Department of Mathematics and Statistics, 
University of Regina, Regina,
SK S4S 0A2, Canada} 
\email{chun-hua.guo@uregina.ca}

\subjclass{Primary 65H10; Secondary 90C33}

\keywords{Absolute value equation; Generalized Newton method; Global convergence; Finite termination.}

\begin{abstract}

We consider the generalized Newton method (GNM) for the absolute value equation (AVE)
$Ax-|x|=b$. The method has finite termination property whenever it is convergent, no matter whether the AVE has a unique solution. We prove that GNM is convergent whenever $\rho(|A^{-1}|)<1/3$. We also present new results for the case where 
$A-I$ is a nonsingular $M$-matrix or an irreducible singular $M$-matrix. When $A-I$ is an irreducible singular $M$-matrix, the AVE may have infinitely many solutions. In this case, we show that GNM always terminates with a uniquely identifiable solution, as long as the initial guess has at least one nonpositive component. 

\end{abstract}

\maketitle

\section{Introduction}
\label{sec1}

We consider the absolute value equation (AVE):
\begin{equation}\label{AVE}
Ax - |x|= b, 
\end{equation}
where $A\in {\mathbb R}^{n\times n}$ and $b\in {\mathbb R}^n$ are given, and $|\cdot |$ denotes absolute value.
The AVE can be obtained \cite{MM06,P09} by reformulating the linear complementarity
problem (LCP), which appears in many mathematical programming problems, and has thus received considerable attention in the optimization community. 
Conditions for the unique solvability of the AVE have been given in \cite{HM23,MM06,RHF14,WL18,ZW09} for example. 
Many numerical methods have been proposed for the AVE \eqref{AVE}; see, for example,  \cite{BFP16,CQZ,CYH23,GWL19,KM17,M07,M09,RHF14,ZH21,ZW09}. 
In particular, the generalized Newton method (GNM) proposed in \cite{M09} is structurally very simple and is easy to use. Moreover, it often terminates with an exact solution only after a few iterations. In this paper, we then give GNM some further considerations. 

We use  ${\rm diag} (v)$ to denote a diagonal matrix corresponding to a vector $v$. 
We use ${\rm sign} (x)$ to denote  a vector with components equal to
$1, 0$ or $-1$ depending on whether the corresponding component of $x$ is positive, zero or
negative. A generalized Jacobian $\partial |x|$ of $|x|$  is given by the diagonal
matrix $D(x)$:
\begin{equation}
D(x) = \partial |x| = {\rm diag} ({\rm sign} (x)). 
\end{equation}
The GNM for the AVE \eqref{AVE} is given in \cite{M09} by 
\begin{equation}\label{GNM}
x^{i+1} = (A- D(x^i ))^{-1}b, \quad i=0, 1, \ldots, 
\end{equation}
assuming that $A- D(x^i )$ is nonsingular. 

In this paper, we explain that GNM has the finite termination property whenever it is convergent, and prove a new convergence result for the GNM. 
We also consider a clearly described class of AVEs, which is shown to be the same as the class of AVEs recently studied in \cite{Chen23}. 
We improve some results in \cite{Chen23} and also obtain some new ones. 

{\bf Notation.} 
The $j$th component of a vector $u$ is denoted by $u_j$ and the $j$th diagonal entry of a diagonal matrix $D$ is denoted by $(D)_{j}$. The identity matrix is  denoted by $I$ or $I_n$ (to specify its dimension). 
We use $\|\cdot\|$   to denote the vector 2-norm and matrix 2-norm, and use  $\rho(A)$ to denote the spectral radius of a square matrix $A$. 
The superscript $T$
denotes the transpose of a vector or matrix, and $|A|$ denotes the matrix $\left[ |a_{ij}|\right] $ for any 
$n\times k$ matrix $A=\left[ a_{ij}\right]$.   
An $n\times k$  real matrix $A=\left[ a_{ij}\right]$ is called
nonnegative (positive) if $a_{ij}\geq 0$ $(a_{ij}>0)$ for all $i$ and $j$.
For real matrices $A$ and $B$ of the same size, we write $A\geq B$ ($A>B$) if $A-B$ is nonnegative
(positive).

\section{Preliminaries}
\label{sec2}

Using the connection between AVE and LCP, Zhang and Wei \cite{ZW09} show that the AVE \eqref{AVE} has a unique solution for every $b$  if the interval matrix $[A-I, A+I]$ is regular, i.e., all matrices $X$ with $A-I\le X\le A+I$ are nonsingular. It is shown in \cite[Theorem 3.2]{WL18} that this is actually a necessary and sufficient condition. Two commonly used sufficient conditions are $\|A^{-1}\|<1$ \cite{MM06} and 
$\rho(|A^{-1}|)<1$ \cite{RHF14}. 

A real square matrix $A$ is called a
$Z$-matrix if all its off-diagonal entries are nonpositive. Any $Z$-matrix $A$ can be written as $sI-B$ with $B\geq 0$; it is called a nonsingular $M$-matrix if $s>\rho (B)$, and a singular $M$-matrix if $s=\rho (B)$.  It is clear that $A$ is a nonsinguar (singular) $M$-matrix if and only if $A^T$ is so. 
A matrix $A$ is reducible if there is a permutation matrix $P$ such that 
$$
P^TAP=\left [\begin{array}{cc}
A_{11} & A_{12} \\
0 & A_{22}
\end{array} \right ], 
$$
where $A_{11}$ and $A_{22}$ are square matrices. 
A matrix $A$ is irreducible if it is not reducible. It is clear that $A$ is irreducible if and only if $A^T$ is irreducible. 
We rely heavily on results from the Perron--Frobenius theory of nonnegative matrices \cite{BPl94,V00}. 

\begin{lem}\cite[Theorems 2.7 and 2.20]{V00}\label{lem0}
Let $A\ge 0$ be a square matrix. Then $\rho(A)$ is an eigenvalue of $A$. 
If $A$ is also irreducible, then $\rho(A)$ is a positive eigenvalue of $A$ with a corresponding eigenvector $u>0$. 
The vector $u$ is uniquely determined up to a scalar multiple. 
\end{lem}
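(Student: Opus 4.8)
The plan is to prove this as the Perron--Frobenius theorem, treating first the irreducible case (which contains strictly positive matrices) and then reducing the general nonnegative case to it by a perturbation.

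\emph{Stage 1: irreducible $A\ge 0$.} The key tool is that $M:=(I+A)^{n-1}>0$ when $A$ is irreducible, since a positive $(i,j)$ entry of $M$ records a walk of length at most $n-1$ from $i$ to $j$ in the strongly connected digraph of $A$. For nonzero $x\ge 0$ I would use the Collatz--Wielandt ratio $r(x)=\min_{x_j>0}(Ax)_j/x_j$ and set $r=\sup\{r(x): x\ge 0,\ x\ne 0\}$, which by homogeneity equals the supremum over the unit simplex $\Delta$. From $Ax\ge r(x)x$ and $AM=MA$ one gets $A(Mx)\ge r(x)(Mx)$, so $r(Mx)\ge r(x)$; hence the supremum is unchanged if taken over the compact set $M\Delta$ of strictly positive vectors, on which $r$ is continuous, and is therefore attained at some $z=My>0$. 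If $Az\ne rz$, then $Az-rz$ is nonzero and nonnegative, so $M(Az-rz)>0$, i.e. $r(Mz)>r$, a contradiction; thus $Az=rz$ with $z>0$, and $r>0$ because an irreducible $A\ge 0$ is not the zero matrix. For any eigenvalue $\lambda$ of $A$ with eigenvector $w$, the triangle inequality yields $|\lambda|\,|w|\le A|w|$, whence $|\lambda|\le r(|w|)\le r$; since $r$ is itself an eigenvalue, $r=\rho(A)$. Finally, if $Aw=\rho(A)w$ with $w$ real, replace $w$ by $-w$ if needed and let $t$ be the largest number with $z-tw\ge 0$; then $z-tw$ is a nonnegative $\rho(A)$-eigenvector, so if it were nonzero it would equal $(1+\rho(A))^{-(n-1)}M(z-tw)>0$, contradicting the choice of $t$; hence $w$ is a multiple of $z$, and a complex eigenvector reduces to this case through its real and imaginary parts.

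\emph{Stage 2: general $A\ge 0$.} Let $E$ be the all-ones matrix and $A_\varepsilon=A+\varepsilon E>0$ for $\varepsilon>0$. Each $A_\varepsilon$ is irreducible, so Stage 1 gives that $\rho(A_\varepsilon)$ is an eigenvalue with a unit eigenvector $z_\varepsilon>0$. The characteristic polynomial of $A_\varepsilon$ depends polynomially on $\varepsilon$, so its roots, in particular $\rho(A_\varepsilon)$, converge to those of $A$; thus $\rho(A_\varepsilon)\to\rho(A)$ as $\varepsilon\to 0^+$. Passing to a subsequence with $z_{\varepsilon_k}\to z$, $\|z\|=1$, $z\ge 0$, and letting $k\to\infty$ in $A_{\varepsilon_k}z_{\varepsilon_k}=\rho(A_{\varepsilon_k})z_{\varepsilon_k}$ gives $Az=\rho(A)z$, so $\rho(A)$ is an eigenvalue of $A$.

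The step I expect to be the main obstacle is the attainment of the Collatz--Wielandt supremum in Stage 1: the ratio $r(\cdot)$ need not be continuous on all of $\{x\ge 0,\ x\ne 0\}$, and the device of replacing $\Delta$ by $M\Delta$ -- a compact set of strictly positive vectors where $r$ genuinely is continuous -- is exactly what fixes this. Further points needing careful justification are the combinatorial identity $M>0$ for irreducible $A$, the handling of complex eigenvectors in both $r=\rho(A)$ and the simplicity claim, and the continuity of the spectral radius under the perturbation in Stage 2.
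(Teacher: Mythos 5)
Your proof is correct: the Collatz--Wielandt argument with the positivity of $(I+A)^{n-1}$, followed by the $\varepsilon$-perturbation for the reducible case, is sound, and the subtle points you flag (attainment of the supremum on $M\Delta$, the zero-component contradiction for uniqueness, continuity of $\rho$) are all handled properly. The paper itself gives no proof --- it simply cites Varga's Theorems 2.7 and 2.20 --- and your argument is essentially the classical proof found in that reference, so there is nothing further to reconcile.
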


The next result follows directly from the previous one by using the definition of singular $M$-matrices.

\begin{lem}\cite[Theorem 6.4.16]{BPl94}\label{6416}
If $A$ is an irreducible singular $M$-matrix, then there is a vector  $u>0$ such that $Au=0$. 
The vector $u$ is uniquely determined up to a scalar multiple. 
\end{lem}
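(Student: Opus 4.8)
The plan is to obtain this as an immediate corollary of Lemma~\ref{lem0}, exactly as the preceding remark indicates. First I would invoke the definition of a singular $M$-matrix to write $A=sI-B$ with $B\ge 0$ and $s=\rho(B)$. The one point that deserves a word of justification is that $B$ inherits irreducibility from $A$: the off-diagonal entries of $A$ are precisely the negatives of those of $B$, so $A$ and $B$ have the same pattern of nonzero off-diagonal entries; consequently, for any permutation matrix $P$, the lower-left block of $P^TAP$ is zero if and only if the lower-left block of $P^TBP$ is zero. Hence $A$ is irreducible if and only if $B$ is irreducible, and so $B\ge 0$ is irreducible.

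Now Lemma~\ref{lem0} applies to $B$: the number $\rho(B)=s$ is a positive eigenvalue of $B$ admitting an eigenvector $u>0$, and $u$ is uniquely determined up to a scalar multiple. From $Bu=su$ I get $Au=(sI-B)u=su-Bu=0$, which produces the desired positive null vector. For the uniqueness claim, I would note that if $v$ satisfies $Av=0$, then $Bv=sv=\rho(B)v$, so $v$ is an eigenvector of $B$ for the eigenvalue $\rho(B)$ and is therefore a scalar multiple of $u$ by the uniqueness part of Lemma~\ref{lem0}.

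There is essentially no obstacle in this argument; the statement is a direct translation of the Perron--Frobenius content of Lemma~\ref{lem0} through the substitution $A=sI-B$, and the only step that is not purely formal is the irreducibility equivalence between $A$ and $B$, which was handled above.
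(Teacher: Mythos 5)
Your proof is correct and is exactly the derivation the paper intends when it says the lemma "follows directly from the previous one by using the definition of singular $M$-matrices": write $A=sI-B$ with $B\ge 0$ irreducible and $s=\rho(B)$, then apply Lemma~\ref{lem0} to $B$. The only detail you add beyond the paper's one-line justification is the (correct) observation that irreducibility transfers from $A$ to $B$ because it depends only on the zero pattern of the off-diagonal entries.
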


We also need the following results. 

\begin{lem}\cite[Theorem 6.2.3]{BPl94}
A $Z$-matrix $A$ is a nonsingular $M$-matrix if and only if $A^{-1}\ge 0$.
\end{lem}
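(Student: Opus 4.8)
The plan is to argue directly from the defining decomposition of a $Z$-matrix. Write $A=sI-B$ with $B\ge 0$; this is possible with $s=\max_i a_{ii}$ (or any larger value), since for a $Z$-matrix the off-diagonal entries of $B=sI-A$ are $-a_{ij}\ge 0$ and the diagonal entries are $s-a_{ii}\ge 0$. First I would note that the sign of $s-\rho(B)$ is independent of the choice of $s$: if $s\le s'$ are both admissible, then $B'=B+(s'-s)I\ge 0$ and, using Lemma~\ref{lem0} together with the triangle inequality, $\rho(B')=\rho(B)+(s'-s)$, so $s'-\rho(B')=s-\rho(B)$. Hence the property of being a nonsingular $M$-matrix is unambiguous, and I may fix one such $s$.

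For the forward implication, assume $A$ is a nonsingular $M$-matrix, so $s>\rho(B)\ge 0$; in particular $s>0$ and $\rho(B/s)<1$. The Neumann series then converges and
\[
A^{-1}=s^{-1}(I-B/s)^{-1}=s^{-1}\sum_{k=0}^{\infty}(B/s)^{k}.
\]
Every term on the right-hand side is a nonnegative matrix and $s^{-1}>0$, so $A^{-1}\ge 0$.

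For the converse, assume $A$ is nonsingular with $A^{-1}\ge 0$. By Lemma~\ref{lem0}, $\rho(B)$ is an eigenvalue of $B\ge 0$ with an eigenvector $v\ge 0$, $v\neq 0$. Then $Av=(sI-B)v=(s-\rho(B))v$, and since $A$ is invertible we must have $s-\rho(B)\neq 0$. Applying $A^{-1}$ gives $v=(s-\rho(B))\,A^{-1}v$ with $A^{-1}v\ge 0$; if $s-\rho(B)<0$ this would force $v\le 0$, hence $v=0$, a contradiction. Therefore $s>\rho(B)$, i.e., $A$ is a nonsingular $M$-matrix.

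I do not anticipate a genuine obstacle: this is a classical characterization, and the only points requiring a little care are the independence of the $M$-matrix property from the choice of $s$ (handled via Lemma~\ref{lem0}) and ensuring $s>0$ so that dividing by $s$ and expanding the Neumann series are legitimate — which is automatic once $s>\rho(B)\ge 0$. The one genuinely nontrivial input is the Perron--Frobenius fact recorded in Lemma~\ref{lem0}; everything else is elementary.
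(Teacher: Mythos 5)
This lemma is stated in the paper purely as a citation of Berman--Plemmons, with no proof given, so there is no in-paper argument to compare against; the relevant question is whether your proof stands on its own, and it does. Both directions are correct and follow the standard route: the Neumann series $A^{-1}=s^{-1}\sum_{k\ge 0}(B/s)^k\ge 0$ for the forward implication, and the Perron eigenvector argument for the converse. Your preliminary check that the sign of $s-\rho(B)$ is independent of the admissible choice of $s$ is also correct and is a detail many write-ups skip. One small caveat: in the converse you invoke a nonnegative eigenvector of $B\ge 0$ associated with $\rho(B)$ \emph{without} assuming irreducibility. The paper's restatement in Lemma~\ref{lem0} only guarantees that $\rho(B)$ is an eigenvalue in the general case and supplies a (positive) eigenvector only when $B$ is irreducible, so you are using slightly more than that restatement literally provides. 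The stronger fact is true and is part of the full statement of Varga's Theorem~2.20 (and can be obtained by a limiting argument from the irreducible case, e.g.\ by perturbing $B$ to $B+\epsilon E$ with $E>0$ and letting $\epsilon\to 0$), but it is worth either citing it explicitly or sketching that limiting step so the converse does not appear to rest on an unstated hypothesis.
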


\begin{lem}\cite[Theorem 6.2.7]{BPl94}\label{627}
An irreducible $Z$-matrix $A$ is a nonsingular $M$-matrix if and only if for
some $u>0$ the vector $Au$ is nonnegative and nonzero.
\end{lem}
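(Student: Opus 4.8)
The plan is to prove both implications directly from the Perron--Frobenius facts recorded in Lemma~\ref{lem0}. Throughout, I would write $A = sI - B$ with $s$ chosen large enough (say $s \ge \max_i a_{ii}$) so that $B = sI - A \ge 0$; since $A$ is a $Z$-matrix the off-diagonal entries of $B$ are $-a_{ij}\ge 0$, and since irreducibility depends only on the zero pattern of the off-diagonal entries, $B$ is irreducible exactly when $A$ is. Thus $B$ is an irreducible nonnegative matrix, and, by definition, $A$ is a nonsingular $M$-matrix precisely when $s > \rho(B)$.

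For the forward direction, assume $A$ is a nonsingular $M$-matrix, so $s > \rho(B)$. By Lemma~\ref{lem0} there is an eigenvector $u > 0$ with $Bu = \rho(B)u$, whence $Au = su - Bu = (s - \rho(B))u > 0$. In particular $Au$ is nonnegative and nonzero, so this $u$ works (indeed it gives the stronger conclusion $Au > 0$).

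For the converse, suppose $u > 0$ satisfies $Au \ge 0$ and $Au \ne 0$; equivalently $Bu \le su$ with $su - Bu$ nonnegative and nonzero. I would apply Lemma~\ref{lem0} to $B^T$ (also irreducible and nonnegative, with $\rho(B^T) = \rho(B)$) to obtain a left Perron eigenvector $v > 0$ with $v^T B = \rho(B)v^T$. Multiplying $su - Bu \ge 0$ on the left by $v^T > 0$ and using that $su - Bu$ has at least one positive entry gives $v^T(su - Bu) > 0$, i.e., $(s - \rho(B))\,v^T u > 0$; since $v^T u > 0$ this forces $s > \rho(B)$, so $A$ is a nonsingular $M$-matrix.

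The argument is short, and the only step needing care is the converse: one must invoke the \emph{left} Perron eigenvector of $B$ (via $B^T$) and use its strict positivity to upgrade the easy inequality $\rho(B) \le s$ to the strict inequality $\rho(B) < s$. This is exactly the point where the hypothesis $Au \ne 0$, rather than merely $Au \ge 0$, is essential — without it one only gets that $A$ is a (possibly singular) $M$-matrix.
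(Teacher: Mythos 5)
Your proof is correct. The paper itself gives no argument for this lemma --- it is quoted verbatim from Berman and Plemmons \cite[Theorem 6.2.7]{BPl94} --- so there is no in-paper proof to compare against; your derivation, splitting $A=sI-B$ with $B\ge 0$ irreducible and using the right Perron eigenvector of $B$ for the forward direction and the left Perron eigenvector (via $B^T$) for the converse, is the standard and complete argument, and it relies only on the Perron--Frobenius facts already recorded in Lemma~\ref{lem0}. You also correctly isolate the one delicate point: the strict positivity of the left eigenvector is what converts the hypothesis that $Au$ is nonnegative and \emph{nonzero} into the strict inequality $s>\rho(B)$.
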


\begin{lem}\cite[Theorem 2.21]{V00} \label{lem1}
For any $n\times n$ matrices $X$ and $Y$, $\rho(X)\le \rho(|X|)$. 
If $0\le X\le Y$, then $\rho(X)\le \rho(Y)$. 
\end{lem}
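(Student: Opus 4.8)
\medskip
\noindent\emph{Proof proposal.} This is the classical comparison/monotonicity theorem for spectral radii, and I would obtain both inequalities from a single source: Gelfand's formula $\rho(M)=\lim_{k\to\infty}\|M^k\|^{1/k}$, valid for any submultiplicative norm on ${\mathbb R}^{n\times n}$ (and, after the obvious embedding, on ${\mathbb C}^{n\times n}$). The norm I would use is the induced $\ell_\infty$-operator norm $\|M\|_\infty=\max_i\sum_j|m_{ij}|$, which is not only submultiplicative but also \emph{entrywise monotone}: $\|A\|_\infty\le\|B\|_\infty$ whenever $|A|\le|B|$ componentwise, since $\|\cdot\|_\infty$ depends on $M$ only through $|M|$ and is obviously monotone in $|M|$. (The max column-sum norm would serve equally well; the spectral norm $\|\cdot\|$ would not, as it is not entrywise monotone.) The two elementary entrywise facts feeding into this norm are: (i) $|M^k|\le|M|^k$ for every $M$ and every $k\ge1$; and (ii) $0\le X^k\le Y^k$ for every $k\ge1$ whenever $0\le X\le Y$. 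Both follow by induction on $k$ from the triangle inequality $\bigl|\sum_r a_rb_r\bigr|\le\sum_r|a_r|\,|b_r|$ applied to the $(i,j)$ entry of a matrix product; for (ii) one additionally uses that the entries of $X$ are nonnegative when multiplying the two inequalities $X^k\le Y^k$ and $X\le Y$ together.

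Granting (i), for the first assertion I would note that, for each $k\ge1$, $\|X^k\|_\infty=\bigl\|\,|X^k|\,\bigr\|_\infty\le\bigl\|\,|X|^k\,\bigr\|_\infty$, using entrywise monotonicity of $\|\cdot\|_\infty$ together with (i). Taking $k$th roots and letting $k\to\infty$, Gelfand's formula applied to $X$ and to $|X|$ gives $\rho(X)\le\rho(|X|)$. An equally short alternative avoids Gelfand for this part: if $Xv=\lambda v$ with $v\ne0$ and $|\lambda|=\rho(X)$, then $\rho(X)\,|v|=|\lambda v|=|Xv|\le|X|\,|v|$, and the elementary principle that $Bw\ge tw$ for some $B\ge0$, $0\ne w\ge0$ and $t\ge0$ forces $\rho(B)\ge t$ (itself a one-line consequence of Gelfand, or of the Collatz--Wielandt characterization) yields $\rho(|X|)\ge\rho(X)$.

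Granting (ii), for the second assertion with $0\le X\le Y$ I would argue the same way: $\|X^k\|_\infty\le\|Y^k\|_\infty$ for every $k\ge1$ by entrywise monotonicity of $\|\cdot\|_\infty$ and (ii), whence $\rho(X)\le\rho(Y)$ by Gelfand. Nothing here is deep; the only point deserving care is the choice of norm, which must be simultaneously legitimate in Gelfand's formula and monotone for the entrywise order, and I expect that to be the sole (very mild) obstacle — the inductions (i), (ii) and the limit passage are entirely routine.
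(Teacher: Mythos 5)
The paper does not prove this lemma at all: it is quoted verbatim from Varga's \emph{Matrix Iterative Analysis} (Theorem 2.21), so there is no internal argument to compare against. Your proof is correct and is essentially the standard textbook one — the entrywise inequalities $|X^k|\le |X|^k$ and $0\le X^k\le Y^k$, fed through an entrywise-monotone submultiplicative norm and Gelfand's formula — and your side remarks (the $\ell_\infty$ operator norm works, the spectral norm does not because it is not entrywise monotone, the Collatz--Wielandt alternative for the first inequality) are all accurate.
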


\begin{lem}\label{lem2} 
If $\rho(X)<1$ for $X\ge 0$ and $Y=(I-X)^{-1}X$, then 
$\rho(Y)=\rho(X)/(1-\rho(X))$. 
\end{lem}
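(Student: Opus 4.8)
The plan is to sandwich $\rho(Y)$ between $\rho(X)/(1-\rho(X))$ and itself: I would get the lower bound by exhibiting $\rho(X)/(1-\rho(X))$ as an eigenvalue of $Y$, and the matching upper bound by a norm estimate on $Y$.

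First I would note that, since $X\ge 0$ and $\rho(X)<1$, the Neumann series converges and $(I-X)^{-1}=\sum_{k\ge 0}X^k\ge 0$, so $Y=(I-X)^{-1}X=\sum_{k\ge 1}X^k\ge 0$; by Lemma \ref{lem0}, $\rho(Y)$ is then an eigenvalue of $Y$. For the lower bound I would apply Lemma \ref{lem0} to $X$ to obtain $u\ne 0$ with $Xu=\rho(X)u$. Then $(I-X)u=(1-\rho(X))u$, hence $(I-X)^{-1}u=\frac{1}{1-\rho(X)}u$, and therefore $Yu=(I-X)^{-1}Xu=\rho(X)(I-X)^{-1}u=\frac{\rho(X)}{1-\rho(X)}u$. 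Thus $\frac{\rho(X)}{1-\rho(X)}$ is an eigenvalue of $Y$, so $\rho(Y)\ge\frac{\rho(X)}{1-\rho(X)}$.

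For the reverse inequality I would fix $\varepsilon>0$ small enough that $\rho(X)+\varepsilon<1$ and choose a submultiplicative matrix norm $\|\cdot\|_\varepsilon$ with $\|X\|_\varepsilon\le\rho(X)+\varepsilon$ (such a norm always exists). Since $\|X\|_\varepsilon<1$, the series $\sum_{k\ge 1}X^k$ converges in this norm as well, and $\|Y\|_\varepsilon\le\sum_{k\ge 1}\|X\|_\varepsilon^{\,k}=\frac{\|X\|_\varepsilon}{1-\|X\|_\varepsilon}\le\frac{\rho(X)+\varepsilon}{1-\rho(X)-\varepsilon}$, using that $t\mapsto t/(1-t)$ is increasing on $[0,1)$. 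Hence $\rho(Y)\le\|Y\|_\varepsilon\le\frac{\rho(X)+\varepsilon}{1-\rho(X)-\varepsilon}$ for all such $\varepsilon$; letting $\varepsilon\to 0^{+}$ gives $\rho(Y)\le\frac{\rho(X)}{1-\rho(X)}$. Combining the two bounds yields $\rho(Y)=\rho(X)/(1-\rho(X))$.

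I expect no real obstacle here; the argument is routine once $Y\ge 0$ is observed. The only points that deserve a little care are the existence of a submultiplicative norm arbitrarily close to the spectral radius (a standard fact) and the passage to the limit in $\varepsilon$. Alternatively, one can avoid the norm argument altogether: since $X$ and $(I-X)^{-1}$ commute, a common Schur triangularization shows that the eigenvalues of $Y$ are precisely $\lambda/(1-\lambda)$ as $\lambda$ ranges over the eigenvalues of $X$, and then $|\lambda/(1-\lambda)|\le|\lambda|/(1-|\lambda|)\le\rho(X)/(1-\rho(X))$ for each such $\lambda$ by the triangle inequality and the same monotonicity.
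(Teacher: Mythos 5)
Your argument is correct, but your main route differs from the paper's. The paper works entirely at the level of eigenvalues: it invokes the spectral mapping $\mu=\lambda/(1-\lambda)$ (every eigenvalue of $Y$ arises this way from an eigenvalue $\lambda$ of $X$), bounds $|\mu|$ directly by writing $\lambda=ae^{i\theta}$ and estimating $|1-\lambda|\ge 1-a$, and then uses Perron--Frobenius to see that the bound is attained at $\lambda=\rho(X)$. You obtain the same lower bound the same way (the Perron eigenvector of $X$ is an eigenvector of $Y$ for $\rho(X)/(1-\rho(X))$), but your upper bound goes through the Neumann series $Y=\sum_{k\ge 1}X^k$ together with a submultiplicative norm satisfying $\|X\|_\varepsilon\le\rho(X)+\varepsilon$, followed by $\varepsilon\to 0^{+}$. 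Both are sound; the paper's computation is shorter and self-contained (no appeal to the existence of norms approximating the spectral radius), while yours has the minor advantage of exposing $Y\ge 0$ explicitly and avoiding any discussion of why the eigenvalues of $Y$ are exactly the $\lambda/(1-\lambda)$ --- a point the paper asserts without proof and you justify, in your closing alternative, via simultaneous triangularization. That closing alternative is essentially the paper's proof verbatim, so you have in effect supplied both arguments.
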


\begin{proof}
Every eigenvalue $\mu$ of $Y$ is given by $\mu=\lambda/(1-\lambda)$, where $\lambda$ is an eigenvalue of $X$. 
Write $\lambda=ae^{i\theta}$ with $0\le a\le \rho(X)$. Then 
$$
|\mu|=\frac{a}{\sqrt{(1+a^2-2a\cos \theta}}\le \frac{a}{1-a}
\le \frac{\rho(X)}{1-\rho(X)}. 
$$
When $X\ge 0$, $\rho(X)$ is an eigenvalue of $X$ (see Lemma \ref{lem0}) and thus $\rho(X)/(1-\rho(X))$ is an eigenvalue of $Y$. Therefore, $\rho(Y)=\rho(X)/(1-\rho(X))$. 
\end{proof}

\section{Convergence and finite termination} 
\label{sec3}

We start with an improved statement about finite termination of GNM,  as compared to \cite[Proposition 4]{M09}.

\begin{prop}\label{ft1}
Suppose that $x^1, \ldots, x^i, x^{i+1}$ are defined by \eqref{GNM} for a given $x^0$. If 
$(D(x^{i+1}))_{j}=(D(x^{i}))_{j}$ for each $j$ with 
$(D(x^{i+1}))_{j}\ne 0$, then $x^{i+1}$ is a solution of the AVE \eqref{AVE}. 
\end{prop}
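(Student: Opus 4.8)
The plan is to unwind the defining relation of the iteration and compare it directly with the AVE. Rewriting \eqref{GNM} as the linear system it represents, the vector $x^{i+1}$ satisfies
\begin{equation*}
(A-D(x^i))\,x^{i+1}=b,\qquad\text{i.e.}\qquad Ax^{i+1}-D(x^i)x^{i+1}=b.
\end{equation*}
On the other hand, $x^{i+1}$ is a solution of \eqref{AVE} precisely when $Ax^{i+1}-|x^{i+1}|=b$, and since $|x^{i+1}|=D(x^{i+1})x^{i+1}$ by the definition of $D(\cdot)$, it suffices to show that the hypothesis forces $D(x^i)x^{i+1}=D(x^{i+1})x^{i+1}$.

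I would verify this equality componentwise, splitting into two cases for each index $j$. If $x^{i+1}_j=0$, then $(D(x^{i+1}))_j x^{i+1}_j=0=(D(x^i))_j x^{i+1}_j$, so the $j$th components agree regardless of the sign of $x^i_j$. If $x^{i+1}_j\ne0$, then $(D(x^{i+1}))_j\ne0$, so the hypothesis gives $(D(x^i))_j=(D(x^{i+1}))_j$, and again the $j$th components of $D(x^i)x^{i+1}$ and $D(x^{i+1})x^{i+1}$ coincide. Hence $D(x^i)x^{i+1}=|x^{i+1}|$, and substituting this into the displayed identity yields $Ax^{i+1}-|x^{i+1}|=b$, as desired.

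There is essentially no hard step here; the only point requiring care — and the reason the statement is genuinely sharper than the naive claim that $D(x^{i+1})=D(x^i)$ forces termination — is the observation that the indices $j$ with $x^{i+1}_j=0$ need no control at all, because such components contribute nothing to either $|x^{i+1}|$ or $D(x^i)x^{i+1}$. So the case split on whether $x^{i+1}_j$ vanishes is the whole content of the argument, and I would make sure to state it explicitly rather than glossing over the zero components.
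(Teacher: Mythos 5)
Your proof is correct and follows essentially the same route as the paper: both reduce the claim to the identity $D(x^i)x^{i+1}=D(x^{i+1})x^{i+1}=|x^{i+1}|$, which you justify by the same componentwise case split on whether $x^{i+1}_j$ vanishes. No issues.
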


\begin{proof}
The proof is the same as that of \cite[Proposition 4]{M09}. That is, 
$$
0=(A - D(x^i ))x^{i+1} - b = Ax^{i+1} - D(x^{i+1})x^{i+1} - b= Ax^{i+1} - |x^{i+1}| - b.
$$
Note that  $D(x^i )x^{i+1} = D(x^{i+1})x^{i+1}$ holds without the stronger assumption $D(x^{i+1})=D(x^{i})$ in \cite[Proposition 4]{M09}.
\end{proof}

\begin{example}
Consider the AVE \eqref{AVE} with 
$$
A=\left [\begin{array}{cc} 
3 & -1\\
-1 & 3
\end{array} \right ], \quad b=\left [\begin{array}{c}
1 \\
-4
\end{array} \right ]. 
$$
With $x^0=[-1, -1]^T$, we find $x^1=[0, -1]^T$. So $x^1$ is a solution of \eqref{AVE} by Proposition \ref{ft1}, while \cite[Proposition 4]{M09} not yet applies. 
\end{example}

We will see later in this paper that Proposition \ref{ft1} can be used to improve a recent result in \cite{Chen23}. 
If the sequence $\{x^i\}$ is defined by \eqref{GNM} and converges to some vector 
$x^*$, then $x^*$ is clearly a solution of \eqref{AVE}. If $x^*$ has no zero components, then 
Proposition \ref{ft1} guarantees  
that $x^i=x^*$ for all $i$ sufficiently large. In other words, GNM has the finite termination property. 
If $x^*$ has  some  zero components, then Proposition \ref{ft1} by itself does not guarantee 
that $x^i=x^*$ for all $i$ sufficiently large. In fact, it does not rule out the situation where $x^*=[1, 0]^T$ and $x^i=[1, (-1)^i 2^{-i}]^T$.

The next result shows that the above situation would never happen.  It is a more general statement than \cite[Lemma 2]{ZW09}, where $[A-I, A+I]$ is regular. 

\begin{prop}\label{ft2}
Suppose that $x^1, \ldots, x^i, x^{i+1}$ are defined by \eqref{GNM} for a given $x^0$ and $x^*$ is a solution of \eqref{AVE}. 
If $(D(x^{i}))_j=(D(x^{*}))_j$ for each $j$ with 
$(D(x^{*}))_j\ne 0$, then $x^{i+1}=x^*$. 
\end{prop}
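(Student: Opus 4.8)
The plan is to imitate the mechanism behind Proposition \ref{ft1}, using only that the diagonal matrix $D(x^i)$ depends on $x^i$ through its signs and that $|x^*| = D(x^*)x^*$. First I would record the two linear identities available. Since $x^{i+1}$ is produced by \eqref{GNM}, the matrix $A - D(x^i)$ is nonsingular and $(A - D(x^i))x^{i+1} = b$. Since $x^*$ solves \eqref{AVE}, we have $Ax^* - |x^*| = b$, i.e. $(A - D(x^*))x^* = b$. The goal is to show the right-hand sides coincide after replacing $D(x^*)$ by $D(x^i)$.

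The key step is the componentwise identity $D(x^i)x^* = D(x^*)x^*$. For an index $j$ with $x^*_j = 0$, both sides have $j$th component equal to $0$; for an index $j$ with $x^*_j \ne 0$, the hypothesis $(D(x^{i}))_j=(D(x^{*}))_j$ makes the $j$th components agree as well. Hence $|x^*| = D(x^*)x^* = D(x^i)x^*$, and therefore $(A - D(x^*))x^* = Ax^* - D(x^i)x^* = (A - D(x^i))x^*$. Combining this with the two displayed identities gives $(A - D(x^i))x^{i+1} = b = (A - D(x^i))x^*$, and since $A - D(x^i)$ is invertible we conclude $x^{i+1} = x^*$.

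I do not expect a genuine obstacle here; the one point to get right is that the hypothesis is precisely what is needed for $D(x^i)$ and $D(x^*)$ to act identically \emph{on the particular vector} $x^*$ — they need not agree as matrices — exactly as in Proposition \ref{ft1}, where one only needs $D(x^i)$ and $D(x^{i+1})$ to act identically on $x^{i+1}$. It is also worth noting in passing that nonsingularity of $A-D(x^i)$ is automatic from the assumption that $x^{i+1}$ is defined by \eqref{GNM}, so no extra regularity of the interval matrix $[A-I,A+I]$ is required, which is where this statement improves on \cite[Lemma 2]{ZW09}.
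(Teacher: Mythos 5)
Your proof is correct and rests on the same key identity as the paper's: the hypothesis forces $D(x^i)x^* = |x^*|$, so $x^*$ and $x^{i+1}$ both satisfy the same nonsingular linear system $(A-D(x^i))x = b$. The paper packages this as the single line $x^{i+1}-x^* = (A-D(x^i))^{-1}(D(x^i)x^*-|x^*|) = 0$ (a form it reuses later in the convergence theorem), but the argument is essentially identical.
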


\begin{proof}
For later use, we present a simple proof as follows. 
\begin{eqnarray*}
x^{i+1}-x^*&=& (A-D(x^i))^{-1}b-x^*\\
&=& (A-D(x^i))^{-1}(b-(A-D(x^i))x^*)\\
&=& (A-D(x^i))^{-1}(D(x^i)x^*-|x^*|).
\end{eqnarray*}
If $(D(x^{i}))_j=(D(x^{*}))_j$ for each $j$ with 
$(D(x^{*}))_j\ne 0$, then $D(x^i)x^*-|x^*|=0$ and thus 
$x^{i+1}=x^*$.
\end{proof}

Although the sign pattern of $x^*$ is usually not known beforehand, Proposition \ref{ft2} guarantees finite termination of GNM whenever it is convergent, as 
already mentioned in \cite{ZW09} for the case where $[A-I, A+I]$ is regular. 

\begin{cor}
If the sequence $\{x^i\}$ is defined by \eqref{GNM} and converges to some vector $x^*$ then $x^*$ is a solution of \eqref{AVE} and $x^i=x^*$ for all $i$ sufficiently large. In other words, GNM has finite termination property whenever it is convergent, even if the AVE \eqref{AVE} has multiple solutions. 
\end{cor}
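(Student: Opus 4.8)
The plan is to feed the convergence hypothesis into Proposition \ref{ft2}, taking advantage of the crucial feature that Proposition \ref{ft2} asks nothing of the components on which $x^*$ vanishes.

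First I would record a sign-stabilization fact. For each index $j$ with $(D(x^*))_j\ne 0$, i.e. $x^*_j\ne 0$, the convergence $x^i_j\to x^*_j$ forces ${\rm sign}(x^i_j)={\rm sign}(x^*_j)$, hence $(D(x^i))_j=(D(x^*))_j$, once $i$ exceeds some threshold depending on $j$. Taking the largest of these finitely many thresholds yields an integer $N$ with
$$
(D(x^i))_j=(D(x^*))_j \quad\text{for all } i\ge N \text{ and all } j \text{ with } (D(x^*))_j\ne 0.
$$
Nothing is claimed about the indices with $x^*_j=0$; this is exactly where Proposition \ref{ft2} is stronger than a naive argument, and it is what excludes oscillating tails such as the hypothetical $x^i=[1,(-1)^i2^{-i}]^T$ discussed above.

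Next I would check that $x^*$ is a solution of \eqref{AVE}. From \eqref{GNM} we have $Ax^{i+1}=b+D(x^i)x^{i+1}$, and
$$
D(x^i)x^{i+1}=D(x^i)(x^{i+1}-x^*)+D(x^i)x^*.
$$
Since $\|D(x^i)\|\le 1$ and $x^{i+1}-x^*\to 0$, the first term tends to $0$; for $i\ge N$ the second term equals $|x^*|$, because $(D(x^i))_jx^*_j={\rm sign}(x^*_j)x^*_j=|x^*_j|$ when $x^*_j\ne 0$, and $(D(x^i))_jx^*_j=0=|x^*_j|$ when $x^*_j=0$. Letting $i\to\infty$ gives $Ax^*=b+|x^*|$, so $x^*$ solves \eqref{AVE}.

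Finally I would invoke Proposition \ref{ft2}: for any fixed $i\ge N$ its hypotheses are met by the previous two paragraphs, so $x^{i+1}=x^*$; hence $x^i=x^*$ for all $i\ge N+1$, which is the asserted finite termination. I do not expect a genuine obstacle here. The only mildly delicate point is the discontinuity of ${\rm sign}(\cdot)$ at $0$, but it is harmless: in every product that matters the factor ${\rm sign}(x^i_j)$ is multiplied by a quantity ($x^{i+1}_j$ or $x^*_j$) that is well behaved on precisely the indices where $x^*_j=0$.
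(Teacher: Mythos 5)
Your proposal is correct and follows essentially the same route the paper intends: sign stabilization on the components where $x^*\ne 0$, verification that the limit $x^*$ solves \eqref{AVE}, and then Proposition \ref{ft2} to conclude $x^{i+1}=x^*$ for all large $i$. You merely make explicit the step the paper dismisses as ``clearly a solution,'' and your limit argument for that step (using $D(x^i)x^*=|x^*|$ once the signs have stabilized) is sound.
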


Sometimes, it is more convenient to show convergence and then know the finite termination. Sometimes, it is more convenient to show finite termination and then know the convergence. 
The following convergence result of GNM is known. 

\begin{thm}
Suppose that $\|A^{-1}\|<1/3$. Then for any $b$ and any starting point $x^0$ 
GNM \eqref{GNM} generates a sequence $\{x^i\}$  converging to 
the unique solution $x^*$ of the AVE \eqref{AVE}. Moreover, 
$$
\|x^{i+1}-x^*\|\le c  \|x^i-x^*\|
$$
for some constant $c<1$ and  all $i\ge 0$. 
\end{thm}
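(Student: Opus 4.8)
The plan is to show that the GNM iteration map is a contraction, which yields both global convergence and the linear rate; the finite termination then follows from the Corollary above. Writing $x^*$ for a solution of \eqref{AVE} — existence and uniqueness is guaranteed since $\|A^{-1}\|<1/3$ implies $\|A^{-1}\|<1$, the classical sufficient condition of \cite{MM06} — I would start from the identity already derived in the proof of Proposition \ref{ft2}:
\[
x^{i+1}-x^* = (A-D(x^i))^{-1}\bigl(D(x^i)x^* - |x^*|\bigr).
\]
The first factor is controlled by a Neumann-series bound: since $\|A^{-1}\|<1/3<1$ and $\|D(x^i)\|\le 1$, the matrix $A-D(x^i)=A(I-A^{-1}D(x^i))$ is nonsingular with $\|(A-D(x^i))^{-1}\|\le \|A^{-1}\|/(1-\|A^{-1}\|)$. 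The second factor I would bound by observing that $D(x^i)x^* - |x^*| = (D(x^i)-D(x^*))x^*$, and that the diagonal matrix $D(x^i)-D(x^*)$ has entries in $\{-2,-1,0,1,2\}$ but, crucially, is zero in every coordinate $j$ where $x^*_j=0$; in the remaining coordinates $|(D(x^i)-D(x^*))_j\, x^*_j|\le 2|x^*_j|$, but in fact one can do better — if $\mathrm{sign}(x^i_j)\ne\mathrm{sign}(x^*_j)$ with $x^*_j\ne 0$ then either the sign is flipped (factor $2$) or $x^i_j=0$ (factor $1$). The coarse uniform bound $\|D(x^i)x^*-|x^*|\|\le 2\|x^i - x^*\|$ is what I would actually use, obtained coordinatewise: whenever $(D(x^i)-D(x^*))_j\ne 0$ we have $x^*_j\ne x^i_j$ is not immediate, so instead I would argue $|(D(x^i)x^*-|x^*|)_j| = |(D(x^i)_j - \mathrm{sign}(x^*_j))x^*_j|$ and note this is nonzero only when $D(x^i)_j\ne D(x^*)_j$, in which case $|x^*_j-x^i_j|\ge |x^*_j|$ (if the signs genuinely differ or $x^i_j=0$) or $|x^*_j - x^i_j| \ge$ something comparable; combining, $|D(x^i)x^*-|x^*||\le 2|x^i-x^*|$ entrywise, hence in $2$-norm. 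Putting the two bounds together gives
\[
\|x^{i+1}-x^*\| \le \frac{2\|A^{-1}\|}{1-\|A^{-1}\|}\,\|x^i-x^*\| =: c\,\|x^i-x^*\|,
\]
and $\|A^{-1}\|<1/3$ is exactly the condition that makes $c<1$.

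With the contraction in hand, the sequence $\{x^i\}$ is Cauchy and converges; its limit is a fixed point of the iteration, hence a solution of \eqref{AVE}, which by uniqueness must be $x^*$. The stated error bound is the contraction inequality itself.

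The main obstacle is the estimate $\|D(x^i)x^* - |x^*|\| \le 2\|x^i-x^*\|$ — in particular handling coordinates where $x^*_j\ne 0$ but $x^i_j$ has the wrong sign or is zero, and verifying that the "bad" coordinates are precisely those where $x^i_j$ and $x^*_j$ differ enough to absorb the factor. One must be careful that the bound is vacuously fine on coordinates with $x^*_j=0$ (the corresponding entry of $D(x^i)x^*-|x^*|$ is automatically $0$), so no term of the form $|x^i_j|$ without a matching $|x^*_j|$ ever appears. Once this coordinatewise inequality is pinned down, the rest is the routine Neumann-series bound and a standard Banach fixed-point argument.
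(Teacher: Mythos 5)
Your proof is correct. Note that the paper does not actually prove this theorem; it is quoted as known, being a special case of \cite[Theorem 2]{BFP16} (and of \cite[Proposition 7]{M09} once the superfluous hypotheses there are removed, as discussed in the accompanying Remark). Your argument is, however, essentially the norm version of the proof the paper gives for the companion result under $\rho(|A^{-1}|)<1/3$: the same starting identity $x^{i+1}-x^*=(A-D(x^i))^{-1}(D(x^i)x^*-|x^*|)$ from Proposition \ref{ft2}, the same Neumann-series bound $\|(A-D(x^i))^{-1}\|\le \|A^{-1}\|/(1-\|A^{-1}\|)$, and the same factor $2$, giving $c=2\|A^{-1}\|/(1-\|A^{-1}\|)<1$ precisely when $\|A^{-1}\|<1/3$. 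The only real difference is how the factor $2$ is obtained: the paper rewrites $D(x^i)x^*-|x^*|=D(x^i)(x^*-x^i)+|x^i|-|x^*|$ and bounds each piece by $\|x^i-x^*\|$ (using $\|D(x^i)\|\le 1$ and the reverse triangle inequality for $|\cdot|$), thereby avoiding any case analysis, whereas you bound $(D(x^i)-D(x^*))x^*$ directly coordinatewise. Your case analysis does close: coordinates with $x^*_j=0$ contribute nothing; if $x^i_j=0\ne x^*_j$ the contribution is $|x^*_j|=|x^i_j-x^*_j|$; and if the signs are strictly opposite the contribution is $2|x^*_j|\le 2|x^i_j-x^*_j|$ because $x^i_j$ and $x^*_j$ lie on opposite sides of $0$. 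So your key estimate $\|D(x^i)x^*-|x^*|\|\le 2\|x^i-x^*\|$ is valid, and the identified ``main obstacle'' is genuinely surmounted. One small simplification: since your contraction inequality is already anchored at the solution $x^*$ (whose existence and uniqueness you correctly get from $\|A^{-1}\|<1$), the Cauchy/fixed-point detour is unnecessary --- iterating gives $\|x^i-x^*\|\le c^i\|x^0-x^*\|\to 0$ directly.
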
 

\begin{remark}
This result is a special case of \cite[Theorem 2]{BFP16}. 
The same conclusion is reached in \cite[Proposition 7]{M09} under the assumption that $\|A^{-1}\|<1/4$ and $\|D(x^i)\|\ne 0$. It is clear from \cite{M09} that the condition $\|D(x^i)\|\ne 0$ is not needed. The condition $\|A^{-1}\|<1/4$ is imposed in \cite{M09} only because the inequality $\||x|-|y|\|\le 2 \|x-y\|$ is used in the proof while  $\||x|-|y|\|\le  \|x-y\|$
holds trivially. This has already been pointed out in \cite{LLW18}, where the assumption  $\|D(x^i)\|\ne 0$ is still kept. 
\end{remark}

We now prove a new convergence result.

\begin{thm}
Suppose that $\rho(|A^{-1}|)<1/3$. Then for any $b$ and any starting point $x^0$ 
GNM \eqref{GNM} generates a sequence $\{x^i\}$  converging to 
the unique solution $x^*$ of the AVE \eqref{AVE}. 
\end{thm}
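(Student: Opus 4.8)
The plan is to turn the error identity from the proof of Proposition~\ref{ft2} into a componentwise linear contraction. Since $\rho(|A^{-1}|)<1/3<1$, the AVE \eqref{AVE} has a unique solution $x^*$ (by \cite{RHF14}), so it remains to prove global convergence. First I would check that GNM is well defined for every $x^i$: writing $A-D(x^i)=A(I-A^{-1}D(x^i))$ and noting $|A^{-1}D(x^i)|\le|A^{-1}|$ componentwise (because $|D(x^i)|\le I$), Lemma~\ref{lem1} gives $\rho(A^{-1}D(x^i))\le\rho(|A^{-1}D(x^i)|)\le\rho(|A^{-1}|)<1$, so $I-A^{-1}D(x^i)$ and hence $A-D(x^i)$ is nonsingular. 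Expanding $(I-A^{-1}D(x^i))^{-1}$ as a Neumann series and passing to absolute values componentwise then yields the uniform bound
\[
\bigl|(A-D(x^i))^{-1}\bigr|\le(I-|A^{-1}|)^{-1}|A^{-1}|.
\]

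Next, from the computation in the proof of Proposition~\ref{ft2},
\[
x^{i+1}-x^*=(A-D(x^i))^{-1}\bigl(D(x^i)x^*-|x^*|\bigr).
\]
The elementary but crucial step is the componentwise estimate $\bigl|D(x^i)x^*-|x^*|\bigr|\le 2\,|x^i-x^*|$. This is proved by looking at the $j$th component and comparing the sign of $x^*_j$ with $(D(x^i))_j\in\{-1,0,1\}$: the component vanishes when $x^*_j=0$ or when $x^i_j$ and $x^*_j$ have the same sign; it equals $-|x^*_j|$ when $x^i_j=0\ne x^*_j$, in which case $|x^i_j-x^*_j|=|x^*_j|$; and it has absolute value $2|x^*_j|$ when $x^i_j$ and $x^*_j$ have strictly opposite signs, in which case $|x^i_j-x^*_j|=|x^i_j|+|x^*_j|\ge|x^*_j|$. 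Combining the last two displays gives, componentwise,
\[
|x^{i+1}-x^*|\le 2(I-|A^{-1}|)^{-1}|A^{-1}|\,|x^i-x^*|.
\]

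Finally I would close the argument. Put $M=|A^{-1}|$ and $N=(I-M)^{-1}M$; iterating the last inequality gives $|x^i-x^*|\le(2N)^i|x^0-x^*|$ for all $i$, since $2N\ge0$. By Lemma~\ref{lem2}, $\rho(N)=\rho(M)/(1-\rho(M))$, and $\rho(M)<1/3$ forces $\rho(N)<1/2$, hence $\rho(2N)<1$ and $(2N)^i\to0$. Therefore $x^i\to x^*$.

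The main obstacle is isolating the correct constant in the bound $\bigl|D(x^i)x^*-|x^*|\bigr|\le 2|x^i-x^*|$: the naive estimate $\bigl|D(x^i)x^*-|x^*|\bigr|\le 2|x^*|$ is of no use on its own because it is not tied to $|x^i-x^*|$, so the sign-pattern case analysis is indispensable; it is precisely the factor $2$ there, together with the ``$\rho/(1-\rho)$'' effect of the matrix inverse captured by Lemma~\ref{lem2}, that produces the threshold $1/3$ rather than something smaller.
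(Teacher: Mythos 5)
Your proposal is correct and follows essentially the same route as the paper: nonsingularity of $A-D(x^i)$ via $\rho(|A^{-1}|)<1$, the uniform componentwise bound $|(A-D(x^i))^{-1}|\le (I-|A^{-1}|)^{-1}|A^{-1}|$, the error identity from Proposition~\ref{ft2}, a factor-$2$ componentwise contraction, and Lemma~\ref{lem2} to get $\rho(2B)<1$. The only (harmless) difference is how the bound $\bigl|D(x^i)x^*-|x^*|\bigr|\le 2|x^i-x^*|$ is obtained: you use a direct sign-pattern case analysis, while the paper inserts $x^i$ to write $D(x^i)x^*-|x^*|=D(x^i)(x^*-x^i)+|x^i|-|x^*|$ and applies the triangle inequality; both are valid.
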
 

\begin{proof}
For each $i\ge 0$, $|A^{-1}D(x^i)|\le |A^{-1}|$. Therefore, by Lemma \ref{lem1}, 
$\rho(A^{-1}D(x^i))\le \rho(|A^{-1}D(x^i)|)\le \rho(|A^{-1}|)<1/3$, and 
then $(I-A^{-1}D(x^i))^{-1}$ exists. Moreover, 
$$
\left |(I-A^{-1}D(x^i))^{-1}\right |=\left |\sum_{k=0}^{\infty} (A^{-1}D(x^i))^k\right |\le \sum_{k=0}^{\infty} |A^{-1}|^k
=(I-|A^{-1}|)^{-1}. 
$$
Thus $A-D(x^i)=A(I-A^{-1}D(x^i))$ is nonsingular. Since $|XY|\le |X||Y|$ whenever 
$XY$ is defined, we have  
$$
|(A-D(x^i))^{-1}|\le |(I-A^{-1}D(x^i))^{-1}| |A^{-1}|\le (I-|A^{-1}|)^{-1}|A^{-1}|.  
$$
Let $B=(I-|A^{-1}|)^{-1}|A^{-1}|\ge 0$.

We continue with the expression for $x^{i+1}-x^*$ in the proof of Proposition \ref{ft2}. 
\begin{eqnarray*}
x^{i+1}-x^*&=&  (A-D(x^i))^{-1}(D(x^i)x^*-|x^*|)\\
&=& (A-D(x^i))^{-1}(D(x^i)(x^*-x^i+x^i)-|x^*|)\\
&=& (A-D(x^i))^{-1}(D(x^i)(x^*-x^i)+|x^i|-|x^*|).
\end{eqnarray*}
Taking absolute value on both sides, we have for each $i\ge 0$ 
\begin{eqnarray*}
|x^{i+1}-x^*| &\le &  |(A - D(x^i))^{-1}|(||x^i| - |x^*|| + |D(x^i)| |x^i- x^*|)\\
&\le &  |(A - D(x^i))^{-1}|(2|x^i - x^*|)\\
&\le &  2 B |x^i - x^*|. 
\end{eqnarray*}
Then for each $i\ge 0$ 
$$
|x^{i}-x^*| \le (2B)^{i}|x^0-x^*|. 
$$
So $\{x^i\}$ converges to $x^*$ as long as $\rho(2B)=2\rho(B)<1$. 
However, by Lemma \ref{lem2}, 
$$
\rho(B)=(1-\rho(|A^{-1}|))^{-1}\rho(|A^{-1}|)<\frac{\frac{1}{3}}{1-\frac{1}{3}}=\frac{1}{2}. 
$$
So $\rho(2B)<1$, as required. 
\end{proof}

Note that the conditions $\|A^{-1}\|< \frac{1}{3}$ and $\rho(|A^{-1}|)<1/3$ are quite different. 
The difference of $\|A^{-1}\|$ and $\rho(|A^{-1}|)$ has been illustrated by many examples \cite{ZW09,RHF14}. 
Here we add one more example, which shows that the difference can be huge even when the entries of $A$ are all of moderate size. 

\begin{example}
Let 
$$
A=\left [\begin{array}{cccc}
1 & -1 & \cdots &  -1\\
0 & 1 &\ddots & \vdots \\
\vdots & \ddots & \ddots & -1\\
0 & \cdots & 0 & 1
\end{array}
\right ]_{n\times n}. 
$$
Then 
$$
A^{-1}=\left [\begin{array}{ccccc}
1 & 1 & 2 & \cdots &  2^{n-2}\\
0 & 1 & 1 & \ddots & \vdots \\
\vdots & \ddots  &   1 & \ddots & 2\\
\vdots &  & \ddots  &   \ddots & 1\\
0 & \cdots & \cdots & 0 & 1
\end{array}
\right ]_{n\times n}. 
$$
So $\rho(|A^{-1}|)=1$ and $\|A^{-1}\|\ge \|A^{-1} e_n\|>2^{n-2}$,  where $e_n=[0, \ldots, 0, 1]^T$. 
\end{example}

\section{AVEs associated with $M$-matrices}
\label{sec4}

In \cite{Chen23}, the AVE \eqref{AVE} is studied under any one of the following two assumptions: 
\newline 
(A1): $A-I$ is a nonsingular $M$-matrix.  
\newline
 (A2): There exists a vector $v>0$ in ${\mathcal{N}}(A^T-I)$ and $A - I + D$ is a nonsingular $M$-matrix for all
diagonal matrices $D = {\rm diag}(d)$ such that $d \ge 0$ and $d \ne  0$. 

Two issues have been raised in \cite{Chen23}. 
The first issue is how to verify assumption (A2). This issue is resolved by the following result. 

\begin{prop}\label{prop1}
Assumption (A2)  holds if and only if $A-I$ is an irreducible singular $M$-matrix. 
\end{prop}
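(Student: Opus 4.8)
The plan is to prove both directions of the equivalence. For the ``if'' direction, suppose $A-I$ is an irreducible singular $M$-matrix. By Lemma \ref{6416} applied to $(A-I)^T = A^T - I$ (which is also an irreducible singular $M$-matrix, since transposition preserves both irreducibility and the singular $M$-matrix property), there is a vector $v>0$ with $(A^T-I)v=0$, i.e.\ $v \in {\mathcal N}(A^T-I)$. Now fix a diagonal matrix $D={\rm diag}(d)$ with $d\ge 0$ and $d\ne 0$. Since $A-I$ is a $Z$-matrix and adding the nonnegative diagonal $D$ only increases diagonal entries, $A-I+D$ is still an irreducible $Z$-matrix (off-diagonal entries are unchanged, so the irreducibility pattern is the same). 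To show it is a \emph{nonsingular} $M$-matrix I would invoke Lemma \ref{627}: I need some $u>0$ with $(A-I+D)u$ nonnegative and nonzero. The natural candidate is $u = v$ — wait, $v$ lies in the left null space; instead take the right null vector $w>0$ of $A-I$ guaranteed by Lemma \ref{6416} applied to $A-I$ itself. Then $(A-I+D)w = (A-I)w + Dw = Dw \ge 0$, and $Dw \ne 0$ because $d\ne 0$, $d\ge 0$, and $w>0$. Hence by Lemma \ref{627}, $A-I+D$ is a nonsingular $M$-matrix, so (A2) holds.

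For the ``only if'' direction, suppose (A2) holds. First I would argue $A-I$ is a singular $M$-matrix. It is a $Z$-matrix: taking $D = {\rm diag}(d)$ with a single positive entry shows $A-I+D$ is a nonsingular $M$-matrix, hence a $Z$-matrix, hence $A-I$ (which differs only in one diagonal entry) is a $Z$-matrix. Since there is $v>0$ with $(A^T-I)v=0$, the matrix $A-I$ is singular. Writing $A-I=sI-B$ with $B\ge 0$: for any $d\ge 0$, $d\ne 0$, the matrix $A-I+D = (s I + D) - B$ is a nonsingular $M$-matrix, which forces each diagonal entry $s+d_j$ of $sI+D$ to be at least... more precisely it forces $\rho(B) < \min$-type conditions; letting $d\to 0$ along positive vectors and using that the set of nonsingular $M$-matrices among $Z$-matrices with fixed off-diagonal part is characterized by $s+d_j$ exceeding a threshold, one gets $\rho(B)\le s$, and combined with singularity (which gives $s\le\rho(B)$ via the existence of $v>0$ with $v^T(sI-B)=0$, so $s$ is an eigenvalue of $B^T$, hence $s\le\rho(B)$) we conclude $s=\rho(B)$, i.e.\ $A-I$ is a singular $M$-matrix. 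It remains to show $A-I$ is irreducible. Here I would argue by contradiction: if $A-I$ were reducible, permute it to block upper triangular form with diagonal blocks that are themselves singular $M$-matrices or contain one; then one can construct a diagonal perturbation $D$ with $d\ge 0$, $d\ne 0$ supported only on indices corresponding to one irreducible diagonal block, leaving another singular diagonal block untouched, so $A-I+D$ remains singular — contradicting (A2). Alternatively, and more cleanly, I would use the left null vector: (A2) gives $v>0$ with $(A^T-I)v = 0$; if $A-I=sI-B$ is reducible then $B$ is reducible, and a positive vector in the null space of a reducible singular $M$-matrix forces each irreducible diagonal block to be singular, which again lets me build the bad perturbation.

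The main obstacle is the last part: proving irreducibility of $A-I$ from (A2). The ``if'' direction and the ``$A-I$ is a singular $M$-matrix'' part are routine applications of Lemmas \ref{6416} and \ref{627}. The reducibility argument requires care because one must exhibit a \emph{specific} diagonal perturbation $D\ge 0$, $D\ne 0$ that keeps $A-I+D$ singular; the key observation is that if $A-I$ has a reducible structure, a positive left null vector $v$ cannot be ``supported'' on just one block, yet the singular structure must be somewhere, and one can always add a positive perturbation on the complement of a singular block's row/column set without removing that singularity. I expect this to be the most delicate step, and I would handle it by reducing to the canonical block-triangular (Frobenius) normal form of $B$ and analyzing which diagonal blocks have spectral radius equal to $s$.
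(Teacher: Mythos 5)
Your argument is correct and follows essentially the same route as the paper's: Lemma~\ref{6416} combined with Lemma~\ref{627} (applied with the right Perron vector of $A-I$, since adding $D$ does not change the off-diagonal part) for the ``if'' direction, a limiting argument to obtain the singular $M$-matrix property, and a diagonal perturbation supported away from a singular diagonal block of a block-triangularization to force irreducibility. The only blemish is your ``cleaner'' alternative for the last step: a positive left null vector of a reducible singular $M$-matrix does \emph{not} force every irreducible diagonal block to be singular (the $Z$-matrix with rows $(0,-1)$ and $(0,1)$ is a singular $M$-matrix with left null vector $(1,1)^T$, yet its second diagonal block is nonsingular); fortunately your primary argument only needs \emph{some} diagonal block to be singular, which already follows from $\det(A-I)=0$, and that is exactly what the paper exploits.
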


\begin{proof}
Suppose $A-I$ is an irreducible singular $M$-matrix. Then $A^T-I$ is also an irreducible singular $M$-matrix. Therefore, 
there are  $u, v>0$ such that $(A-I)u=0$ and $(A^T-I)v=0$ (see Lemma \ref{6416}). For all diagonal matrices $D ={\rm diag}(d)$ such that $d \ge 0$ and $d \ne  0$, we have $(A-I+D)u\ge 0$ and $(A-I+D)u\ne 0$. Therefore, $A-I+D$ is a nonsingular $M$-matrix (see Lemma \ref{627}).
So assumption (A2) holds. 

Now suppose assumption (A2) holds. By letting $d\to 0$, we know that $A-I$ must be an $M$-matrix (singular or nonsingular). Since $A-I$ is singular by assumption, $A-I$ is a singular $M$-matrix. 
If $A-I$ is reducible, then there is a permutation matrix $P$ such that 
$$
P^T(A-I)P=\left [\begin{array}{cc} 
M_{11} & M_{12} \\
0 & M_{22}
\end{array}
\right ], 
$$
where $M_{11}\in {\mathbb R}^{m\times m}$ ($1<m<n$) and $M_{22}$ is singular. 
Let 
$$
D=\left [\begin{array}{cc} 
I_m  & 0\\
0 &  0
\end{array}
\right ]. 
$$
Then 
$$
P^T(A-I +PDP^T)P=\left [\begin{array}{cc} 
M_{11}+I_m & M_{12} \\
0 & M_{22}
\end{array}
\right ]
$$
is still singular. Thus $A-I+PDP^T$ is singular, contradictory to assumption (A2). This shows that $A-I$ is irreducible. 
\end{proof}

When $A-I$ is an irreducible singular $M$-matrix, there is $v>0$ such that $(A^T-I)v=0$ and $v$ is uniquely determined up to a scalar multiple (see Lemma \ref{6416}). 
We now have a refined statement of \cite[Theorem 3.1]{Chen23}. 

\begin{thm}\label{thmn}
If $A-I$ is a nonsingular $M$-matrix, then GNM \eqref{GNM} converges to an exact solution of \eqref{AVE} in at most $n+2$ iterations. 
If $A-I$ is an irreducible singular $M$-matrix and $v^Tb<0$, then for any $x^0$ with $D(x^0)\ne I$, 
GNM \eqref{GNM} converges to an exact solution of \eqref{AVE} in at most $n+1$ iterations. 
\end{thm}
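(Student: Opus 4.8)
The plan is to use the $M$-matrix hypotheses to establish a monotonicity property of the GNM iterates and then to combine it with Proposition \ref{ft1} via a counting argument on the sign matrices $D_i:=D(x^i)$. I would first record that the iteration is well defined with $(A-D_i)^{-1}\ge 0$. Write $A-D_i=(A-I)+(I-D_i)$, where $I-D_i$ is a diagonal matrix with entries in $\{0,1,2\}$, in particular nonnegative. If $A-I$ is a nonsingular $M$-matrix, then $A-D_i$ is a $Z$-matrix obtained from $A-I$ by adding a nonnegative diagonal and is therefore again a nonsingular $M$-matrix (a standard fact, following from $A-I=sI-B$ with $B\ge 0$, $s>\rho(B)$, together with Lemma \ref{lem1}); hence $A-D_i$ is invertible with $(A-D_i)^{-1}\ge 0$ for every $i$ and every $x^0$. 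If $A-I$ is an irreducible singular $M$-matrix, $v^Tb<0$, and $D(x^0)\ne I$, I would show by induction that $D_i\ne I$ for all $i$: if $D_i\ne I$, then $I-D_i$ is a nonnegative, nonzero diagonal matrix, so by Proposition \ref{prop1} (assumption (A2)) $A-D_i$ is a nonsingular $M$-matrix, whence $x^{i+1}=(A-D_i)^{-1}b$ is defined and $(A-D_i)^{-1}\ge 0$; moreover, since $(A^T-I)v=0$ we have $v^T(A-D_i)=v^T(I-D_i)\ge 0$, so left-multiplying $(A-D_i)x^{i+1}=b$ by $v^T$ gives $v^T(I-D_i)x^{i+1}=v^Tb<0$, which rules out $x^{i+1}>0$; hence $D_{i+1}\ne I$. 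So in both cases $\{x^i\}$ is well defined and $(A-D_i)^{-1}\ge 0$ throughout.

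Next, monotonicity and sign patterns. For $i\ge 1$ we have $(A-D_{i-1})x^i=b$ and $(A-D_i)x^{i+1}=b$, and since $(A-D_{i-1})x^i=(A-D_i)x^i+(D_i-D_{i-1})x^i$ this gives $(A-D_i)(x^{i+1}-x^i)=(D_i-D_{i-1})x^i=|x^i|-D_{i-1}x^i$, whose $j$th component $|x^i_j|-(D_{i-1})_j x^i_j$ is nonnegative. Since $(A-D_i)^{-1}\ge 0$, we get $x^{i+1}\ge x^i$ for all $i\ge 1$. Consequently, for $i\ge 1$, each entry $(D_i)_j={\rm sign}(x^i_j)$ is nondecreasing in $i$: once a component is positive it stays positive, so $(D_i)_j$ moves only through $-1\to 0\to 1$. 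Call a step $k\to k+1$ with $k\ge 1$ bad if some $j$ has $(D_{k+1})_j=1$ but $(D_k)_j\ne 1$; by the above each $j$ makes at most one step bad. If a step $k\to k+1$ is not bad, then $(D_{k+1})_j=(D_k)_j$ for every $j$ with $(D_{k+1})_j\ne 0$, so Proposition \ref{ft1} shows that $x^{k+1}$ solves \eqref{AVE}, and then $x^m=x^{k+1}$ for all $m\ge k+1$ by Proposition \ref{ft2}.

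Finally, the counting. Since each of the $n$ components makes at most one step bad, at least one of the $n+1$ steps $1\to 2,\ldots,(n+1)\to(n+2)$ is not bad, yielding a solution $x^{k+1}$ with $k+1\le n+2$; this gives the first assertion. In the irreducible singular case, the sets $\{j:x^i_j>0\}$, $i\ge 1$, are nondecreasing and, since $D_i\ne I$, always proper, hence stabilize to a proper subset of $\{1,\ldots,n\}$; thus some fixed index $j^*$ satisfies $x^i_{j^*}\le 0$, i.e.\ $(D_i)_{j^*}\ne 1$, for all $i\ge 1$, so $j^*$ never makes a step bad and there are at most $n-1$ bad steps. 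Then one of the $n$ steps $1\to2,\ldots,n\to n+1$ is not bad, giving a solution $x^{k+1}$ with $k+1\le n+1$. I expect the only delicate point to be this bookkeeping: the monotonicity, and hence the bound on the number of bad steps, begins only at $x^1$ rather than $x^0$, which is precisely what accounts for the extra iteration ($n+2$ versus $n+1$), while in the singular case the hypotheses $D(x^0)\ne I$ and $v^Tb<0$ are exactly what keep the iteration well defined and force a persistent nonpositive component, recovering the sharper bound $n+1$.
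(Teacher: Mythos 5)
Your proof is correct and follows essentially the same route as the paper: establish that the iterates are well defined with $(A-D(x^i))^{-1}\ge 0$, derive the monotonicity $x^{i+1}\ge x^i$ (hence $D(x^{i+1})\ge D(x^i)$) for $i\ge 1$, and then count "bad" steps using Proposition \ref{ft1}, with the persistent nonpositive component in the singular case accounting for the sharper bound $n+1$. The only difference is that you prove the well-definedness and monotonicity from scratch, whereas the paper simply cites \cite{Chen23} for these facts and concentrates on the counting argument.
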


\begin{proof}
In view of Proposition \ref{prop1}, it is already proved in \cite{Chen23} 
that the sequence $\{x^i\}$ is well defined and 
$D(x^{k+1}) \ge  D(x^k)$  ($k = 1, 2, \ldots$). 
At the first sight, the maximal number of iterations to reach an exact solution could be $2n+2$. This would occur in the hypothetical case where  
all components of $D(x^1)$ are $-1$ and in each iteration only one component changes (and it is an increase by $1$). In that case all components of 
$D(x^{2n+1})$ are $1$ and then $x^{2n+2}$ is an exact solution. However, Proposition \ref{ft1} says that we have termination if one $-1$ is increased to $0$ and no other components increase at the same iteration. This means that GNM terminates in at most $n+2$ iterations. 
In the second case, $D(x^k)$ has at least one negative component for each $k\ge 1$ (see \cite{Chen23}). Therefore, GNM terminates in at most $n+1$ iterations. 
\end{proof}

\begin{remark} It is important to note that $D(x^1)\ge D(x^0)$ is not true in general. The situation in \cite{BC08} is similar. The algorithm there terminates in at most $n+2$ iterations for case 1 there and at most $n+1$ iterations for case 2 there
(it was stated in \cite{BC08} that the algorithm terminates in at most $n+1$ iterations for both cases). 
For the AVE \eqref{AVE}, when $A-I$ is a nonsingular $M$-matrix and $n=2$, the sign change (starting with $x^0$ and ending with $x^*$) could theoretically be 
$$
\left [\begin{array}{c}
+\\
-
\end{array} \right ]\longrightarrow \left [\begin{array}{c}
-\\
-
\end{array} \right ]\longrightarrow \left [\begin{array}{c}
+\\
-
\end{array} \right ]\longrightarrow \left [\begin{array}{c}
+\\
+
\end{array} \right ]\longrightarrow \left [\begin{array}{c}
+\\
+
\end{array} \right ]. 
$$
This may never happen, but cannot be ruled out by the proof technique initiated in \cite{BC08}. 
Without the special assumption on $A$, it is easy to find an example for which GNM needs $4$ iterations to find an exact solution when $n=2$. 
One such example is 
$$
A=\left [\begin{array}{cc}
0.59 & 1.02\\
0.15 & 0.67
\end{array} \right ], \quad b=\left [\begin{array}{c}
1.68\\
0.05
\end{array} \right ], \quad x^0=\left [\begin{array}{c}
-0.46\\
-0.61
\end{array} \right ] . 
$$
\end{remark} 

When $A-I$ is a nonsingular $M$-matrix, $[A-I, A+I]$ is regular and thus the AVE \eqref{AVE} has a unique solution. 
When $A-I$ is an irreducible singular $M$-matrix, $[A-I, A+I]$ is not regular and thus it is impossible for the AVE \eqref{AVE} to have a unique solution for all $b$. 
In view of Proposition \ref{prop1}, it is already proved in \cite{Chen23} that \eqref{AVE} has a unique solution when $A-I$ is an irreducible singular $M$-matrix and $v^Tb<0$, 
and that \eqref{AVE} has no solution when $A-I$ is an irreducible singular $M$-matrix and $v^Tb>0$. 
We can also prove the following result, which resolves the second issue raised in \cite{Chen23}.  

\begin{prop}
If $A-I$ is an irreducible singular $M$-matrix and $v^Tb=0$, 
then the AVE \eqref{AVE} has infinitely many solutions. 
\end{prop}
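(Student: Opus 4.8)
The plan is to look for solutions of the AVE among the nonnegative vectors. On the set $\{x : x\ge 0\}$ one has $|x|=x$, so the AVE \eqref{AVE} reduces there to the singular linear system $(A-I)x=b$, and the condition $v^Tb=0$ is precisely what makes this system consistent.

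First I would record the null-space structure. Since $A-I$ is an irreducible singular $M$-matrix, so is $A^T-I$, and Lemma \ref{6416} gives $u>0$ with $(A-I)u=0$ and $v>0$ with $(A^T-I)v=0$, each unique up to a positive scalar multiple. In particular $\ker\big((A-I)^T\big)$ is spanned by $v$, so the range of $A-I$ equals $\{y\in{\mathbb R}^n : v^Ty=0\}$. Hence $v^Tb=0$ implies that $b$ lies in the range of $A-I$: there is some $x_0\in{\mathbb R}^n$ with $(A-I)x_0=b$, and the complete solution set of this linear system is $\{x_0+tu : t\in{\mathbb R}\}$.

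Next I would make these vectors nonnegative using the positivity of $u$. Since $u>0$, for $t$ large enough $x_0+tu>0$; fix such a value and call the resulting vector $\bar x>0$. Then for every $t\ge 0$ the vector $x(t):=\bar x+tu$ is positive, so $|x(t)|=x(t)$ and
$$
Ax(t)-|x(t)|=(A-I)x(t)=(A-I)\bar x+t(A-I)u=b+0=b,
$$
i.e. $x(t)$ solves \eqref{AVE}. As $u\neq 0$, distinct values of $t$ give distinct $x(t)$, so \eqref{AVE} has infinitely many solutions (in fact a whole half-line of them, all with no zero component).

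There is no real obstacle in this argument; the only points that need a little care are the identification of the range of $A-I$ with the hyperplane $v^{\perp}$ --- which follows from $\dim\ker\big((A-I)^T\big)=1$ in Lemma \ref{6416} --- and the remark that nonnegativity of the candidate solutions can always be arranged by sliding along the positive null direction $u$, so that the reduction $|x|=x$ is legitimate.
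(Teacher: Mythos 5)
Your proposal is correct and follows essentially the same route as the paper's proof: use $v^Tb=0$ to place $b$ in the range of $A-I$, take a particular solution $w$ of $(A-I)x=b$, and translate along the positive null vector $u$ to obtain an infinite family of nonnegative solutions on which $|x|=x$. The only difference is cosmetic (the paper writes the family as $w-\alpha u$ with $\alpha\le\min_i w_i/u_i$, while you shift upward to a strictly positive $\bar x$ and use $\bar x+tu$, and you spell out the range identification that the paper leaves implicit).
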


\begin{proof}
When $v^Tb = 0$,  $b$ is in the range of $A- I$. So $(A -I )w = b$ for some vector $w$. 
Since $A-I$ is an irreducible singular $M$-matrix, there is $u>0$ such that $(A-I)u=0$. 
Let $\alpha  \le  \min_i \frac{w_i}{u_i}$ and define 
$x(\alpha) = w-\alpha u$. Then $x(\alpha) \ge 0$ is a solution of \eqref{AVE} since 
$Ax(\alpha)-|x(\alpha)|=(A- I )x(\alpha) = b$. 
\end{proof}

It turns out that much more can be said when $A-I$ is an irreducible singular $M$-matrix with $v^Tb=0$. 

\begin{thm}
If $A-I$ is an irreducible singular $M$-matrix and $v^Tb=0$, 
then every solution of the AVE \eqref{AVE} is nonnegative and exactly one solution has one or more zero components. 
For any $x^0$ with $D(x^0)\ne I$,  GNM \eqref{GNM}  is well defined and converges to the unique solution with one or more zero components 
in at most $n+1$ iterations. 
\end{thm}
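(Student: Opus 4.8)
The plan is to split the proof into the two claims: first, the structural result about solutions of the AVE when $A-I$ is an irreducible singular $M$-matrix with $v^Tb=0$, and second, the convergence/finite-termination statement for GNM from any $x^0$ with $D(x^0)\ne I$.

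\textbf{Step 1: Every solution is nonnegative.} Let $x$ be any solution of \eqref{AVE}, so $(A-D(x))x=b$. Write $x=x^+-x^-$ with $x^+,x^-\ge 0$ and disjoint supports; then $|x|=x^++x^-$ and $Ax-|x|=b$ becomes $(A-I)x^+ + (A+I)(-x^-)=\ldots$; more simply, $Ax-|x|=(A-I)x + (|x|-x)$ wait — better: on the set where $x_j\le 0$ we have $|x_j|=-x_j$, and where $x_j>0$ we have $|x_j|=x_j$. So $Ax-|x| = (A-I)x$ on the positive part but $=(A+I)x$ on the nonpositive part, which is awkward componentwise. Instead I would argue: $b = Ax-|x|$, hence $v^T b = v^TAx - v^T|x| = (v^TA)x - v^T|x| = (v^T + 0)x - v^T|x|$ using $v^T(A-I)=0$, i.e. $v^TA=v^T$. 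So $0 = v^Tb = v^Tx - v^T|x| = \sum_j v_j(x_j-|x_j|)$. Since $v>0$ and each $x_j-|x_j|\le 0$, every term must vanish, forcing $x_j=|x_j|\ge 0$ for all $j$. Hence $x\ge 0$, and then $D(x)$ has entries in $\{0,1\}$ and $(A-I)x=b$.

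\textbf{Step 2: Exactly one solution has a zero component.} By Step 1 the solution set is $\{w-\alpha u : (A-I)w=b,\ u>0,\ (A-I)u=0\}$ intersected with the nonnegative orthant — and conversely any nonnegative $x$ with $(A-I)x=b$ is a solution. Fixing one particular $w$, the solutions are exactly $x(\alpha)=w-\alpha u\ge 0$, i.e. $\alpha\le \alpha^*:=\min_i w_i/u_i$. For $\alpha<\alpha^*$ all components are strictly positive (since $u>0$), and at $\alpha=\alpha^*$ at least one component is zero; moreover at $\alpha=\alpha^*$ the set of zero components is exactly $\arg\min_i w_i/u_i$, which is the same for every choice of representative $w$ (changing $w$ by a multiple of $u$ just shifts $\alpha^*$). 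So there is a unique solution with one or more zero components, namely $x(\alpha^*)$.

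\textbf{Step 3: GNM from $x^0$ with $D(x^0)\ne I$ converges.} As noted in the excerpt (citing \cite{Chen23} via Proposition \ref{prop1}), under assumption (A2) the GNM iterates are well defined and $D(x^{k+1})\ge D(x^k)$ for $k\ge 1$, so the sign pattern is monotone nondecreasing after the first step; since there are only finitely many diagonal sign matrices, the sequence of $D(x^k)$ stabilizes, and once $D(x^{i+1})$ and $D(x^i)$ agree on the support of $D(x^{i+1})$, Proposition \ref{ft1} gives that $x^{i+1}$ solves \eqref{AVE}. The monotonicity count gives termination in at most $n+1$ iterations exactly as in the second case of Theorem \ref{thmn}: I would check that $D(x^k)$ cannot equal $I$ for any $k$ (if it did, $(A-I)x^{k+1}=b$ with $A-I$ singular would still be consistent, but I must rule this branch out — this is where the hypothesis $D(x^0)\ne I$ and the $v^Tb=0$ structure enter, mirroring the $v^Tb<0$ argument of \cite{Chen23}). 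Then the finite-termination solution $x^*$ has $D(x^*)\ne I$, i.e. $x^*$ has at least one nonpositive — hence, by Step 1, exactly one zero — component, so by Step 2 it must be the unique solution $x(\alpha^*)$ with one or more zero components.

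\textbf{Main obstacle.} The delicate point is Step 3's claim that GNM actually terminates (rather than merely having a monotone sign pattern among iterates $\ge 1$) and that the limiting sign pattern is never the all-ones pattern $I$. The inequality $D(x^{k+1})\ge D(x^k)$ is only asserted for $k\ge 1$, and the behavior of the very first step is not monotone (as the Remark after Theorem \ref{thmn} stresses); so I must argue carefully that $D(x^0)\ne I$ propagates to $D(x^1)\ne I$, and that thereafter the all-ones pattern is never attained — presumably because $D(x^k)=I$ would make $A-D(x^k)=A-I$ singular, contradicting well-definedness of $x^{k+1}$. Pinning down this "never reach $I$" step, and confirming it gives the sharp bound $n+1$ rather than $n+2$, is the crux; everything else is a short computation with $v$ and $u$.
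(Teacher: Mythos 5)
Your Steps 1 and 2 are sound, but Step 3 has a genuine gap at exactly the point you flag as the crux, and the mechanism you tentatively propose to close it is circular. You suggest that $D(x^k)=I$ is impossible ``because $A-D(x^k)=A-I$ would be singular, contradicting well-definedness of $x^{k+1}$'' --- but well-definedness of the iteration is precisely what you are trying to establish, so you cannot assume it. The missing ingredient is the identity the paper extracts from the iteration itself: writing $b=(A-D(x^i))x^{i+1}=\bigl((A-I)+(I-D(x^i))\bigr)x^{i+1}$ and left-multiplying by $v^T$ (using $v^T(A-I)=0$ and $v^Tb=0$) gives $v^T\bigl(I-D(x^i)\bigr)x^{i+1}=0$. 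If $D(x^i)\ne I$, then $I-D(x^i)$ is a nonzero nonnegative diagonal matrix, so this identity forces $x^{i+1}$ to have at least one nonpositive component, i.e.\ $D(x^{i+1})\ne I$; moreover $\bigl((A-I)+(I-D(x^i))\bigr)u=(I-D(x^i))u$ is nonnegative and nonzero for the Perron vector $u>0$, so $A-D(x^i)$ is a nonsingular $M$-matrix by Lemma \ref{627} and $x^{i+1}$ exists. Induction from $D(x^0)\ne I$ then gives both well-definedness and ``never reach $I$'' simultaneously, after which your appeal to the monotonicity $D(x^{k+1})\ge D(x^k)$ ($k\ge 1$), Proposition \ref{ft1}, and the counting in Theorem \ref{thmn} delivers the bound $n+1$. (Also, a solution with a nonpositive component has \emph{at least} one zero component, not ``exactly one''; the zero set is the full argmin of $w_i/u_i$.)

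Your Step 2 is a genuinely different, and arguably cleaner, route to the uniqueness of the solution with a zero component. The paper instead perturbs $b$ to $b(\epsilon)$ with $v^Tb(\epsilon)<0$, runs GNM from the candidate solution $\bar x$ for both the original and perturbed equations, and lets $\epsilon\to 0$ to pin $\bar x$ down by continuity; that argument leans on the GNM machinery of Theorem \ref{thmn}. You instead observe that by nonnegativity the solution set is exactly $\{x\ge 0:(A-I)x=b\}=\{w-\alpha u:\alpha\le\alpha^*\}$, using the one-dimensionality of $\mathcal N(A-I)$ from Perron--Frobenius (Lemma \ref{lem0}), and that only the endpoint $\alpha=\alpha^*$ touches the boundary of the orthant. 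This is purely linear-algebraic, independent of GNM, and gives for free both the ``infinitely many solutions'' statement (the paper's separate proposition) and an explicit description of the distinguished solution. Both approaches are valid; yours buys transparency, the paper's reuses its convergence theorem.
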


\begin{proof}
Let $x$ be any solution of \eqref{AVE}. Then $b=(A-D(x))x=(A-I+I-D(x))x$ and 
$(I-D(x))x=b-(A-I)x$. Thus $v^T(I-D(x))x=0$, which means that $x$ cannot have any negative components. 

For any $x^0$ with $D(x^0)\ne I$, $x^1$ is defined since $A-D(x^0)=A-I+I-D(x^0)$ is a nosingular $M$-matrix. We have 
$b=(A-D(x^0))x^1=(A-I+I-D(x_0))x^1$ and then $v^T(I-D(x^0))x^1=0$. So $x^1$ has at least one nonpositive component. 
Thus $D(x^1)\ne I$ and iteration \eqref{GNM} continues. 
The sequence $\{x^i\}$ is then well defined. Moreover, $D(x^{i+1})\ge D(x^i)$ for all $i\ge 1$ exactly as for the case $v^Tb<0$ in \cite{Chen23}. 
It follows that $x^i=\hat{x}$ is a solution of \eqref{AVE} with at least one zero component, for some $i\le n+1$, by Proposition \ref{ft1} and the explanation in the proof of Theorem \ref{thmn}. 

We now show that \eqref{AVE} has exactly one solution that has one or more zero components. 
Let $\bar{x}$ be any such solution. Applying GNM \eqref{GNM} with $x^0=\bar{x}$ gives $x^{n+1}=\bar{x}$. 
We now change the first component of $b$ from $b_1$ to $b_1-\epsilon$ (with $\epsilon>0$), and obtain a new vector $b(\epsilon)$. 
Since $v^Tb(\epsilon)<0$, the equation $Ax-|x|=b(\epsilon)$ has a unique solution $x(\epsilon)$. 
Applying GNM \eqref{GNM} with $x^0=\bar{x}$ to the equation $Ax-|x|=b(\epsilon)$
gives $x^{n+1}(\epsilon)=x(\epsilon)$ (see Theorem \ref{thmn}). Since $\lim_{\epsilon \to 0}x^{n+1}(\epsilon)=x^{n+1}=\bar{x}$, 
we have $\lim_{\epsilon \to 0}x(\epsilon)=\bar{x}$. Therefore,  $\bar{x}$ is uniquely determined. 
\end{proof}

\section{Conclusion}
\label{sec5}

In this paper, we have explained that GNM has finite termination property whenever it has convergence, regardless whether or not the AVE \eqref{AVE} has a unique solution. 
We have proved a new convergence result for GNM. 
We have also obtained some new results for the case where $A-I$ is a nonsingular $M$-matrix or an irreducible singular $M$-matrix. 
In particular, when $A-I$ is an irreducible  singular $M$-matrix and the AVE has more than one solutions, 
we have shown that the AVE then has infinitely many solutions, all of them are nonnegative. Moreover,  exactly one of them has some zero components, and this uniquely identified solution can be found by GNM in at most $n+1$ iterations, starting with any $x^0$ with at least one nonpositive component.

\section*{Acknowledgments} 
This work  was supported in part by an NSERC Discovery Grant RGPIN-2020-03973. \\


\begin{thebibliography}{99}

\bibitem{BFP16} 
Bello Cruz, J.Y., Ferreira, O.P., Prudente, L.F.:   On the global convergence of the inexact semi-smooth Newton
method for absolute value equation. Comput. Optim. Appl.  65, 93--108 (2016)

\bibitem{BPl94} Berman, A.,  Plemmons, R.J.:  Nonnegative
Matrices in the Mathematical Sciences.  SIAM, Philadelphia (1994) 

\bibitem{BC08} Brugnano, L., Casulli, V.: Iterative solution of piecewise linear systems. SIAM J. Sci. Comput. 30, 
463--472 (2008) 

\bibitem{CQZ} Caccetta,  L., Qu, B., Zhou, G.: A globally and quadratically convergent method for absolute value
equations. Comput. Optim. Appl. 48, 45--58 (2011)

\bibitem{CYH23} Chen, C., Yu, D., Han, D.: Exact and inexact Douglas--Rachford splitting methods for solving
large-scale sparse absolute value equations. IMA J. Numer. Anal. 43, 1036--1060 (2023)

\bibitem{GWL19} Guo, P., Wu, S.-L., Li, C.-X.: On the SOR-like iteration method for solving absolute value equations. Appl.
Math. Lett. 97, 107--113  (2019)

\bibitem{HM23} Hlad\'ik, M., Moosaei, H.: Some notes on the solvability conditions for absolute value equations. Optim.
Lett. 17, 211--218 (2023)

\bibitem{KM17} Ke, Y.-F., Ma, C.-F.:  SOR-like iteration method for solving absolute value equations. Appl. Math.  Comput. 
311, 195--202 (2017)

\bibitem{LLW18} 
Lian, Y.-Y., Li, C.-X., Wu, S.-L.:  Weaker convergent results of the generalized Newton method for the
generalized absolute value equations. J. Comput. Appl. Math. 338, 221--226  (2018)

\bibitem{M07} Mangasarian, O.L.: Absolute value equation solution via concave minimization. Optim. Lett. 1, 
3--8  (2007) 

\bibitem{M09} Mangasarian, O.L.: A generalized Newton method for absolute value
equations.   Optim. Lett. 3, 101--108 (2009)

\bibitem{MM06} Mangasarian, O.L., Meyer, R.R.: Absolute value equations. Linear Algebra Appl. 419, 359--367 (2006)

\bibitem{P09} Prokopyev, O.: On equivalent reformulations for absolute value equations. Comput. Optim. Appl. 44, 
363--372 (2009)

\bibitem{RHF14} 
Rohn, J., Hooshyarbakhsh, V., Farhadsefat, R.: An iterative method for solving absolute value equations
and sufficient conditions for unique solvability. Optim. Lett. 8, 35--44 (2014)

\bibitem{Chen23} Tang, J., Zheng, W., Chen, C., Yu, D., Han, D.: On finite termination of the generalized Newton method for
solving absolute value equations. Comput. Appl. Math. 42, Paper No. 187 (2023)  

\bibitem{V00} Varga, R.S.:  Matrix Iterative Analysis. Springer (2000) 

\bibitem{WL18} Wu, S.-L., Li, C.-X.: The unique solution of the absolute value equations. Appl. Math. Lett. 76, 
195--200 (2018)

\bibitem{ZH21} Zamani,  M., Hlad\'ik,  M.:  A new concave minimization algorithm for the absolute value equation solution.
Optim. Lett. 15, 2241--2254 (2021) 

\bibitem{ZW09} Zhang,  C., Wei, Q.J.:   Global and finite convergence of a generalized Newton method for absolute value equations. J. Optim. Theory Appl. 143, 391--403 (2009)


\end{thebibliography}
\end{document}